\documentclass[reqno]{amsart}

\makeatletter
\@namedef{subjclassname@2020}{%
  \textup{2020} Mathematics Subject Classification}
\makeatother

\usepackage[T1]{fontenc}
\usepackage{amsthm,amsmath,amssymb,amsfonts,mathtools}
\usepackage{enumitem}
\setlist[enumerate]{nosep,leftmargin=*}
\usepackage{microtype}
\usepackage{mathrsfs}
\usepackage[colorlinks=true,linkcolor=blue,citecolor=blue,urlcolor=blue]{hyperref}

\numberwithin{equation}{section}

\newtheorem{theorem}{Theorem}[section]

\newtheorem{proposition}[theorem]{Proposition}
\newtheorem{corollary}[theorem]{Corollary}
\theoremstyle{definition}
\newtheorem{definition}[theorem]{Definition}
\newtheorem{remark}[theorem]{Remark}

\begin{document}

\title[Explicit homological invariants of graded double Ore extensions]{A note on explicit homological invariants of graded double Ore extensions}

\author{Andr\'es Rubiano}
\address{Universidad Distrital Francisco Jos\'e de Caldas}
\curraddr{Campus Universitario}
\email{aarubianos@udistrital.edu.co}

\subjclass[2020]{16S36, 16E05, 16E65, 16S38}
\keywords{Double Ore extensions, minimal free resolutions, graded Betti numbers, Koszul algebras, PBW bases}

\dedicatory{Dedicated to Camila Ni\~no}

\begin{abstract}
We compute explicit homological invariants of a trimmed graded double Ore extension of the quantum plane. For a pilot family of type \((14641)\), we determine the minimal graded free resolution and graded Betti numbers of the trivial right module and also compute linear resolutions for two natural cyclic right modules. This provides a concrete link between the PBW structure of the algebra and the homological behavior of its natural quotients.
\end{abstract}

\maketitle

\section{Introduction}

Artin-Schelter regular algebras, introduced in \cite{AS87}, are among the basic graded analogues of polynomial rings in noncommutative algebra and projective geometry; see also \cite{ATV91}. A particularly effective construction of higher-dimensional regular algebras is the \emph{double Ore extension} of Zhang and Zhang \cite{ZZ08}, who proved that a connected graded double Ore extension of an Artin-Schelter regular algebra is again Artin-Schelter regular.

The four-dimensional case is especially significant. In \cite{ZZ09}, Zhang and Zhang used double Ore extensions to construct regular algebras of type \((14641)\), proving that they are strongly noetherian, Auslander regular, Koszul and Cohen-Macaulay. Many of these algebras are genuinely new, in the sense that they are not isomorphic to an Ore extension or a normal extension of a three-dimensional regular algebra. This makes trimmed graded double Ore extensions a natural testing ground for explicit homological questions.

The existence of a finite linear resolution of the trivial module is guaranteed abstractly in the Koszul regular setting, but writing down the differentials explicitly is far subtler; see \cite{Pri70,PP05}. Several results point toward such an explicit program. Zhu, Van Oystaeyen and Zhang studied trimmed double Ore extensions of Koszul regular algebras \cite{ZVZ17}; G\'omez and Su\'arez related graded double Ore extensions to graded skew PBW extensions \cite{GS20}; and Carvalho, Lopes and Matczuk characterized when a double Ore extension collapses to an iterated Ore extension \cite{CLM11}. More recently, Cao, Shen and Wang developed a general homological framework for graded double Ore extensions of Koszul algebras \cite{CSW25}.

In this paper we carry out a fully explicit computation for a concrete pilot family. We study the family \(K\) in the classification of \cite{ZZ09}, specialized at \(q=-1\), over the quantum plane. Recent work of Herrera, Higuera and the author shows that this family admits a finite Gr\"obner-Shirshov basis and hence a PBW basis \cite{HHR25}. Our main result is an explicit minimal graded free resolution of the trivial right module \(k_B\), written in matrix form. We also compute the minimal linear resolutions of the natural cyclic right modules
\[
M_x:=B/(x_1,x_2)B, \ M_y:=B/(y_1,y_2)B,
\]
and deduce their graded Betti numbers and Poincar\'e series. These computations show that, even when the total Betti pattern of the trivial module is forced by regularity, the actual differentials and the behavior of natural cyclic quotients still retain concrete information about the defining double Ore data.

Throughout the paper, \(\Bbbk\) denotes a field. All algebras are associative \(\Bbbk\)-algebras and all graded algebras are connected \(\mathbb{N}\)-graded and generated in degree one unless otherwise stated.

\section{Preliminaries}

\subsection{Double Ore extensions}

We recall the notion of double Ore extension introduced by Zhang and Zhang \cite{ZZ08}. Since the original compatibility conditions contain minor misprints, we follow the corrected formulation given by Carvalho, Lopes and Matczuk \cite{CLM11}.

\begin{definition}[{\cite[Definition~1.3]{ZZ08}; \cite[Definition~1.1]{CLM11}}]\label{def:double-ore}
Let \(R\) be a subalgebra of a \(\Bbbk\)-algebra \(B\).
\begin{enumerate}[label=\rm (\alph*)]
    \item We say that \(B\) is a \emph{right double extension} of \(R\) if:
    \begin{enumerate}[label=\rm (\roman*)]
        \item \(B\) is generated by \(R\) together with two new indeterminates \(y_1,y_2\);
        \item the generators \(y_1,y_2\) satisfy a relation
        \begin{equation}\label{eq:double-relation}
        y_2y_1=p_{12}y_1y_2+p_{11}y_1^2+\tau_1y_1+\tau_2y_2+\tau_0,
        \end{equation}
        for some \(p_{12},p_{11}\in\Bbbk\) and \(\tau_0,\tau_1,\tau_2\in R\);
        \item \(B\) is a free left \(R\)-module with basis \(\{y_1^iy_2^j\mid i,j\geq 0\}\);
        \item \(y_1R+y_2R+R\subseteq Ry_1+Ry_2+R\).
    \end{enumerate}
    \item A right double extension \(B\) of \(R\) is called a \emph{double extension} of \(R\) if, in addition,
    \begin{enumerate}[label=\rm (\roman*)]
        \item \(p_{12}\neq 0\);
        \item \(B\) is a free right \(R\)-module with basis \(\{y_2^iy_1^j\mid i,j\geq 0\}\);
        \item \(y_1R+y_2R+R=Ry_1+Ry_2+R\).
    \end{enumerate}
\end{enumerate}
\end{definition}

Condition \ref{def:double-ore}\rm(a)(iv) is equivalent to the existence of maps
\[
\sigma=
\begin{bmatrix}
\sigma_{11} & \sigma_{12}\\
\sigma_{21} & \sigma_{22}
\end{bmatrix}:R\to M_2(R),
\
\delta=
\begin{bmatrix}
\delta_1\\
\delta_2
\end{bmatrix}:R\to M_{2\times 1}(R),
\]
such that
\begin{equation}\label{eq:sigma-delta-action}
\begin{bmatrix}
y_1\\ y_2
\end{bmatrix}r
=
\sigma(r)\begin{bmatrix}
y_1\\ y_2
\end{bmatrix}
+\delta(r)
\ \text{for all } r\in R.
\end{equation}
If \(B\) is a right double extension, we write
\[
B=R_P[y_1,y_2;\sigma,\delta,\tau],
\]
where \(P=(p_{12},p_{11})\in\Bbbk^2\) and \(\tau=\{\tau_0,\tau_1,\tau_2\}\subseteq R\). The collection \(\{P,\sigma,\delta,\tau\}\) is called the \emph{DE-data} of \(B\). A particularly important subclass is that of \emph{trimmed double extensions}, where \(\delta=0\) and \(\tau=\{0,0,0\}\); then we simply write \(R_P[y_1,y_2;\sigma]\).

For a right double extension \(R_P[y_1,y_2;\sigma,\delta,\tau]\), the map \(\sigma\) is an algebra homomorphism and \(\delta\) is a \(\sigma\)-derivation; see \cite[Lemma~1.7]{ZZ08}. The corresponding compatibility conditions are given in \cite[Proposition~1.5]{CLM11}. The following criterion will be used repeatedly to separate genuine double Ore extensions from iterated Ore extensions.

\begin{proposition}[{\cite[Theorems~2.2 and~2.4]{CLM11}}]\label{prop:iterated-criterion}
Let \(B=R_P[y_1,y_2;\sigma,\delta,\tau]\) be a right double extension of \(R\).
\begin{enumerate}[label=\rm (\arabic*)]
    \item The following are equivalent:
    \begin{enumerate}[label=\rm (\roman*)]
        \item \(B\) can be presented as an iterated Ore extension \(R[y_1;\sigma_1,d_1][y_2;\sigma_2,d_2]\);
        \item \(\sigma_{12}=0\).
    \end{enumerate}
    Under these equivalent conditions, \(B\) is a double extension if and only if \(p_{12}\neq 0\) and \(\sigma_{11},\sigma_{22}\) are automorphisms of \(R\).
    \item The algebra \(B\) admits an iterated Ore presentation \(R[y_2;\sigma_2',d_2'][y_1;\sigma_1',d_1']\) if and only if \(\sigma_{21}=0\), \(p_{12}\neq 0\) and \(p_{11}=0\). In that situation, \(B\) is a double extension if and only if the induced maps \(\sigma_{22}\) and \(\sigma_{11}\) are automorphisms of \(R\).
\end{enumerate}
\end{proposition}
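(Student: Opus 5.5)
Since the statement is quoted from \cite[Theorems~2.2 and~2.4]{CLM11}, the paper will simply cite it, but here is how I would prove it directly from Definition~\ref{def:double-ore} and the structure maps \eqref{eq:sigma-delta-action}.

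For (1), the direction (i)\(\Rightarrow\)(ii) reads off the commutation rules. In \(R[y_1;\sigma_1,d_1]\) we have \(y_1r=\sigma_1(r)y_1+d_1(r)\), with no \(y_2\)-term. Since \(B\) is a free left \(R\)-module on \(\{y_1^iy_2^j\}\), the expression \(\sigma_{11}(r)y_1+\sigma_{12}(r)y_2+\delta_1(r)\) is unique, so \(\sigma_{12}=0\) and \(\sigma_1=\sigma_{11}\), \(d_1=\delta_1\). For (ii)\(\Rightarrow\)(i), set \(A=R[y_1;\sigma_{11},\delta_1]\). The hypothesis \(\sigma_{12}=0\) together with the fact that \(\sigma\) is an algebra map (\cite[Lemma~1.7]{ZZ08}) makes \(\sigma_{11}\) an endomorphism, and it makes \(\delta_1\) a \(\sigma_{11}\)-derivation. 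Then \(A\) embeds in \(B\) by the left-freeness in (a)(iii).

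Next I define \(\sigma_2:A\to A\) and \(d_2:A\to A\) on generators. For \(r\in R\), I put \(\sigma_2(r)=\sigma_{22}(r)\) and \(d_2(r)=\sigma_{21}(r)y_1+\delta_2(r)\). For \(y_1\), relation \eqref{eq:double-relation} gives \(\sigma_2(y_1)=p_{12}y_1\) and \(d_2(y_1)=p_{11}y_1^2+\tau_1y_1+\tau_0\), with the \(\tau_2y_2\) term absorbed into \(\sigma_2(y_1)\). This is where the awkward point lies: \(\tau_2\in R\) sits to the left of \(y_2\), so the correct assignment is \(\sigma_2(y_1)=p_{12}y_1+\tau_2\). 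The main obstacle is checking that these assignments extend to an endomorphism \(\sigma_2\) and a \(\sigma_2\)-derivation \(d_2\) of \(A\), which means verifying compatibility with \(y_1r=\sigma_{11}(r)y_1+\delta_1(r)\). I would do this by invoking the compatibility identities of \cite[Proposition~1.5]{CLM11} specialized to \(\sigma_{12}=0\). Once that is done, the universal property of Ore extensions gives a surjection \(A[y_2;\sigma_2,d_2]\to B\). Comparing the free bases \(\{y_1^iy_2^j\}\) on both sides then shows it is an isomorphism.

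For the double extension clause, condition (b)(iii) asks that \(Ry_1+Ry_2+R=y_1R+y_2R+R\). With \(\sigma\) lower triangular, this holds exactly when \(\sigma_{11}\) and \(\sigma_{22}\) are surjective, and then bijective. Right freeness on \(\{y_2^iy_1^j\}\) follows from the Ore structure once these maps are automorphisms, provided \(p_{12}\neq0\), so that \(\sigma_2\) is bijective on \(y_1\).

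For (2), I run the symmetric argument with the roles of \(y_1\) and \(y_2\) swapped. The extra conditions come from \eqref{eq:double-relation}. To present \(y_1\) as an Ore variable over \(R[y_2]\), we need \(y_1y_2\in R[y_2]y_1+R[y_2]\). Solving \eqref{eq:double-relation} for \(y_1y_2\) requires \(p_{12}\neq0\), and it produces a \(y_1^2\) term unless \(p_{11}=0\). The condition \(\sigma_{21}=0\) is forced by the same uniqueness argument as in (1).
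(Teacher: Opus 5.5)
The paper gives no argument of its own here; it simply quotes Carvalho--Lopes--Matczuk. Your treatment of (i)$\Leftrightarrow$(ii) in (1) is fine. The construction of $\sigma_2,d_2$ is also easier than you fear: $B$ is a free left $A$-module on $\{y_2^j\}$ and $y_2A\subseteq Ay_2+A$, so $\sigma_2,d_2$ are defined by uniqueness, and their properties follow from associativity.

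The genuine gap is the ``only if'' half of the double-extension clause. Put $T(r,s)=\bigl(\sigma_{11}(r)+\sigma_{21}(s),\,\sigma_{22}(s)\bigr)$, so that $y_1r+y_2s\equiv T(r,s)_1\,y_1+T(r,s)_2\,y_2$ modulo $R$. Given (a)(iv), condition (b)(iii) says precisely that $T$ is surjective. That forces $\sigma_{22}$ to be onto, but for $\sigma_{11}$ it only gives $\sigma_{11}(R)+\sigma_{21}(\ker\sigma_{22})=R$. So your claim that (b)(iii) holds ``exactly when $\sigma_{11},\sigma_{22}$ are surjective'' is not justified. Your ``and then bijective'' has no support either: a surjective ring endomorphism need not be injective. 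In this direction you never use the right freeness (b)(ii), which is the only hypothesis that can produce injectivity.

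The missing step is as follows. Condition (b)(ii) makes $1,y_1,y_2$ right $R$-independent, so $T$ is injective; in particular $\sigma_{11}$ is injective. To show $\ker\sigma_{22}=0$, compare the degree-two components, in the left basis, of $y_2y_1r$ and $(p_{12}y_1y_2+p_{11}y_1^2+\cdots)r$. This gives $\sigma_{22}\sigma_{11}=\sigma_{11}\sigma_{22}$ (here $p_{12}\neq0$ is used) and $\sigma_{21}\sigma_{11}+p_{11}\sigma_{22}\sigma_{11}=p_{12}\sigma_{11}\sigma_{21}+p_{11}\sigma_{11}^2$. For $s\in\ker\sigma_{22}$ these identities say exactly that $T\bigl(-p_{12}\sigma_{21}(s)-p_{11}\sigma_{11}(s),\,\sigma_{11}(s)\bigr)=0$. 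Hence $\sigma_{11}(s)=0$, and so $s=0$. Only then does surjectivity of $T$ give that $\sigma_{22}$ and $\sigma_{11}$ are onto.

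The same gap recurs in the double-extension clause of (2). There the identity $\sigma_{22}\sigma_{12}=p_{12}\sigma_{12}\sigma_{22}$ plays the analogous role for $\ker\sigma_{11}$.

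Finally, the converse in (2) is not literally symmetric to (1). Left freeness on $\{y_2^iy_1^j\}$ is not a hypothesis and must be derived. One way is via the associated graded ring, where $p_{11}=0$ gives $\bar y_2^{\,i}\bar y_1^{\,j}=p_{12}^{ij}\bar y_1^{\,j}\bar y_2^{\,i}$. This is where $p_{12}\neq0$ and $p_{11}=0$ are actually needed for sufficiency.
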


\subsection{The graded setting}

Our main interest lies in trimmed graded double Ore extensions over an Artin-Schelter regular algebra of global dimension \(2\). We therefore write
\[
A=\Bbbk_Q[x_1,x_2]
=
\Bbbk\langle x_1,x_2\rangle
\Big/
\langle x_2x_1-q_{12}x_1x_2-q_{11}x_1^2\rangle,
\]
where \(Q=(q_{12},q_{11})\in\Bbbk^2\) with \(q_{12}\neq 0\). The most relevant special cases are the quantum plane, corresponding to \(Q=(q,0)\) and the Jordan plane, corresponding to \(Q=(1,1)\). In the connected graded Artin-Schelter regular setting of global dimension \(2\), these are precisely the possibilities; see \cite[Lemma~2.4]{ZZ09}.

Let $B=A_P[y_1,y_2;\sigma]$ be a trimmed graded double extension, where \(P=(p_{12},p_{11})\in\Bbbk^2\) with \(p_{12}\neq 0\). Since \(\sigma\) is graded, each \(\sigma_{ij}(x_s)\) is a linear combination of \(x_1,x_2\), say
\begin{equation}\label{eq:sigma-coefficients}
\sigma_{ij}(x_s)=\sum_{t=1}^2 a_{ijst}x_t,
\ a_{ijst}\in\Bbbk.
\end{equation}
Thus \(B\) is defined by the two non-mixing quadratic relations
\begin{align}
x_2x_1 &= q_{12}x_1x_2+q_{11}x_1^2,\label{eq:base-relation}\\
y_2y_1 &= p_{12}y_1y_2+p_{11}y_1^2,\label{eq:y-relation}
\end{align}
together with the four mixing relations
\begin{equation}\label{eq:mixing-relations}
y_i x_s=\sigma_{i1}(x_s)y_1+\sigma_{i2}(x_s)y_2,
\ i,s\in\{1,2\}.
\end{equation}

Zhang and Zhang showed that double extensions over a two-dimensional regular base naturally produce four-dimensional regular algebras of type \((14641)\).

\begin{proposition}[{\cite[Theorem~0.1]{ZZ09}}]\label{prop:type-14641}
Let \(B\) be a connected graded algebra generated by four degree-one elements. If
\[
B=A_P[y_1,y_2;\sigma]
\]
is a double extension of an Artin-Schelter regular algebra \(A\) of global dimension \(2\), then:
\begin{enumerate}[label=\rm (\arabic*)]
    \item \(B\) is strongly noetherian, Auslander regular, Cohen-Macaulay and a domain;
    \item \(B\) is Artin-Schelter regular of global dimension \(4\) and of type \((14641)\); in particular, \(B\) is Koszul;
    \item if \(B\) is not isomorphic to an Ore extension of an Artin-Schelter regular algebra of global dimension \(3\), then the corresponding trimmed double extension belongs to one of the \(26\) families classified in \cite{ZZ09}.
\end{enumerate}
\end{proposition}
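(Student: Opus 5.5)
The statement is Proposition~\ref{prop:type-14641}, which the paper quotes from \cite[Theorem~0.1]{ZZ09}. My plan reconstructs its proof from the general theory of double extensions rather than from any explicit computation.

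\textbf{Step 1: PBW structure and Hilbert series.} Since \(B=A_P[y_1,y_2;\sigma]\) is a double extension, Definition~\ref{def:double-ore}(a)(iii) makes \(B\) a free left \(A\)-module with basis \(\{y_1^iy_2^j\}\). The grading is compatible, so
\[
H_B(t)=H_A(t)(1-t)^{-2}=(1-t)^{-4}.
\]
Here \(A=\Bbbk_Q[x_1,x_2]\) has a PBW basis \(x_1^ax_2^b\). Hence \(B\) has a PBW basis \(x_1^ax_2^by_1^iy_2^j\) with respect to the six quadratic relations \eqref{eq:base-relation}--\eqref{eq:mixing-relations}. A quadratic algebra with a PBW basis is Koszul by Priddy \cite{Pri70}. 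The Koszul dual then has Hilbert series \((1+t)^4\), with dimensions \(1,4,6,4,1\). Consequently the minimal resolution of \(\Bbbk_B\) has the shape
\[
0\to B(-4)\to B(-3)^4\to B(-2)^6\to B(-1)^4\to B\to \Bbbk\to 0,
\]
which is type \((14641)\) once Gorenstein symmetry is known.

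\textbf{Step 2: Artin-Schelter regularity.} I would invoke \cite{ZZ08}: a connected graded double extension of an AS-regular algebra is AS-regular, of global dimension \(\operatorname{gldim}A+2=4\). The idea behind that result is to filter \(B\) by \(y\)-degree. The associated graded algebra is a trimmed double extension, and \(p_{12}\neq0\) together with condition (b)(iii) makes \(\sigma\) invertible in a suitable sense. One can then build the resolution of \(\Bbbk\) as a twisted tensor product of the Koszul resolutions of \(A\) and of \(\Bbbk_P[y_1,y_2]\), and compute \(\operatorname{Ext}_B(\Bbbk,B)\) via a change-of-rings spectral sequence. Combined with Step~1, this gives (2).

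\textbf{Step 3: strongly noetherian, Auslander regular, Cohen-Macaulay, domain.} The PBW filtration has associated graded an iterated skew polynomial ring, or at least a trimmed extension, that is a domain and noetherian. These properties lift through filtrations, which gives domain and noetherian for \(B\). For the stronger properties I would follow Zhang--Zhang and use that four-dimensional regular algebras of this kind admit a normal regular element or a twisting. Alternatively, strong noetherianity survives base change, and the standard Levasseur-type argument shows that noetherian AS-regular algebras of dimension \(\le 4\) with these filtrations are Auslander regular and Cohen-Macaulay. I expect this step to be the main obstacle: lifting Auslander regularity from the associated graded (via Bj\"ork's filtered results) requires care, since the associated graded algebra must itself be Auslander regular. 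I would first try the filtration by total \(y\)-degree, whose associated graded is a trimmed extension that should itself be an iterated Ore extension.

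\textbf{Step 4: part (3).} This is the classification in \cite{ZZ09}. One solves the compatibility conditions (\cite[Proposition~1.5]{CLM11}) on the coefficients \(a_{ijst}\) in \eqref{eq:sigma-coefficients} up to graded isomorphism. Proposition~\ref{prop:iterated-criterion} is then used to discard the cases with \(\sigma_{12}=0\) or \(\sigma_{21}=0\) (with \(p_{11}=0\)), which are Ore extensions. The remaining cases form the \(26\) families. This step is a lengthy case analysis that I would cite rather than redo.
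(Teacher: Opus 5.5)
The paper does not prove this statement; it simply quotes \cite[Theorem~0.1]{ZZ09}. So citing \cite{ZZ08} for AS-regularity (Step~2) and the Zhang--Zhang classification for (3) (Step~4) is in the same spirit as the paper.

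Step~1 is a correct direct route to Koszulity once you write out the step you compress. Use the deg-lex order with $x_1<x_2<y_1<y_2$. The leading words of the six relations are then $x_2x_1$, $y_2y_1$ and $y_ix_s$, and the normal words are exactly the ordered monomials $x_1^ax_2^by_1^cy_2^d$. These monomials are linearly independent in $B$ by freeness over $A$. Hence the six relations present $B$, and by Bergman's diamond lemma they form a quadratic Gr\"obner basis, so Priddy's theorem applies.

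The genuine gap is Step~3: part (1) is never actually proved, and each route you propose fails.
\begin{enumerate}
\item The deg-lex (PBW) filtration has associated graded the monomial algebra $\Bbbk\langle x_1,x_2,y_1,y_2\rangle/(x_2x_1,\,y_2y_1,\,y_ix_s)$. This algebra has zero divisors ($y_1x_1=0$) and is not noetherian, since the right ideal generated by all $x_1^ny_1$ is not finitely generated. Nothing lifts from it.
\item The filtration by $y$-degree gains nothing for a trimmed extension. All six relations are bihomogeneous in $(x,y)$-degree, so $\operatorname{gr}B\cong B$.
\item Your expectation that this trimmed algebra ``should itself be an iterated Ore extension'' fails exactly where the theorem has content. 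By Proposition~\ref{prop:iterated-criterion} it would need $\sigma_{12}=0$, or $\sigma_{21}=0$ and $p_{11}=0$. The paper's own $B_K(\alpha)$ violates both conditions (Proposition~\ref{prop:BK-basic}(2)), and the $26$ families in (3) exist precisely to capture such non-Ore algebras.
\item The fallbacks are not available theorems. AS-regularity in dimension $4$ is not known in general to imply noetherianity, Auslander regularity or the Cohen--Macaulay property. Levasseur-type lifting needs an explicit regular normal element whose factor ring is already known to have these properties. ``Strong noetherianity survives base change'' is essentially the definition, not an argument.
\end{enumerate}

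To close the gap you must either cite \cite{ZZ09} for (1), as the paper does, or supply real structural input:
\begin{enumerate}
\item Treat the Ore-type trimmed extensions by the standard results for Ore extensions of noetherian regular algebras.
\item For the remaining families, find case by case something like a regular normal element, and apply the normal-element lifting results of Levasseur and of Artin--Small--Zhang. For example, in family $K$ the element $x_1$ is normal, because $x_2x_1=-x_1x_2$ and $y_ix_1=x_1y_i$.
\item For a non-trimmed $B$, transfer the properties from the trimmed associated graded algebra of its $y$-degree filtration.
\end{enumerate}
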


\subsection{Minimal graded free resolutions and Betti numbers}

From this point on, unless explicitly stated otherwise, all modules are finitely generated graded \emph{right} modules. If \(B=\bigoplus_{n\ge 0}B_n\) is a connected graded algebra, we denote by
\[
B_{\ge 1}=\bigoplus_{n\ge 1}B_n
\]
its irrelevant ideal and by
\[
\varepsilon:B\to \Bbbk
\]
the canonical augmentation. The corresponding trivial right module is denoted by \(k_B\).

Let \(M\) be a graded right \(B\)-module. A \emph{graded free resolution} of \(M\) is an exact complex
\[
\cdots\longrightarrow F_i\xrightarrow{d_i}F_{i-1}\longrightarrow \cdots \longrightarrow F_1\xrightarrow{d_1}F_0\longrightarrow M\longrightarrow 0,
\]
where each \(F_i\) is a graded free right \(B\)-module. Such a resolution is \emph{minimal} if
\[
d_i(F_i)\subseteq F_{i-1}B_{\ge 1}
\ \text{for all } i\ge 1.
\]
In that case, each \(F_i\) decomposes uniquely up to graded isomorphism as
\[
F_i\cong \bigoplus_{j\in\mathbb{Z}} B(-j)^{\beta_{i,j}^B(M)},
\]
where \(B(-j)\) denotes the degree shift. The integers
\[
\beta_{i,j}^B(M):=\dim_{\Bbbk}\operatorname{Tor}_i^B(M,\Bbbk)_j
\]
are called the \emph{graded Betti numbers} of \(M\) and
\[
\beta_i^B(M):=\sum_j \beta_{i,j}^B(M)
\]
are the \emph{total Betti numbers}.

\begin{definition}[{\cite[Chs.~11--12]{Peeva2011}}]
Let \(B\) be a connected \(\mathbb{N}\)-graded algebra and let \(M\) be a finitely generated graded right \(B\)-module. Suppose that
\[
\cdots \longrightarrow F_i \longrightarrow F_{i-1} \longrightarrow \cdots \longrightarrow F_1 \longrightarrow F_0 \longrightarrow M \longrightarrow 0
\]
is the minimal graded free resolution of \(M\), where
\[
F_i \cong \bigoplus_{j\in\mathbb{Z}} B(-j)^{\beta_{i,j}^B(M)}.
\]
The \emph{bigraded Poincar\'e series} of \(M\) over \(B\) is
\[
P_M^B(s,t):=\sum_{i\geq 0}\sum_{j\in\mathbb{Z}} \beta_{i,j}^B(M)\, s^i t^j.
\]
Equivalently, \(P_M^B(s,t)\) is the generating series of the graded Betti numbers of \(M\).
\end{definition}

\begin{remark}
If one forgets the internal grading, then the ordinary Poincar\'e series is recovered by
\[
P_M^B(z)=\sum_{i\geq 0}\beta_i^B(M)\, z^i,
\ \text{where}\
\beta_i^B(M)=\sum_j \beta_{i,j}^B(M).
\]
In particular, the bigraded Poincar\'e series refines the usual Poincar\'e series by recording both homological degree and internal degree.
\end{remark}

If \(B\) is a Koszul Artin-Schelter regular algebra of type \((14641)\), then the minimal graded free resolution of \(k_B\) has the form
\begin{equation}\label{eq:type-14641-resolution}
0\longrightarrow B(-4)\longrightarrow B(-3)^4\longrightarrow B(-2)^6\longrightarrow B(-1)^4\longrightarrow B\longrightarrow k_B\longrightarrow 0.
\end{equation}
In particular, the total Betti numbers of \(k_B\) are
\[
(1,4,6,4,1).
\]
Therefore, the real homological interest of the present setting lies not merely in recovering the expected Betti pattern of the trivial module, but in making the differentials explicit and in comparing them with those of natural cyclic modules.

For later use, recall also the standard quadratic-Koszul setup; see \cite[Chs.~2 and~3]{PP05}. If $B=T(V)/(R)$ is a quadratic algebra, then the right Koszul complex is built from the subspaces
\[
W_0=\Bbbk,\ W_1=V,\ W_2=R,
\]
and, for \(n\geq 3\),
\[
W_n=\bigcap_{i+j+2=n}V^{\otimes i}\otimes R\otimes V^{\otimes j}\subseteq V^{\otimes n}.
\]
When \(B\) is Koszul, the minimal graded free resolution of \(k_B\) is precisely the right Koszul complex
\small{\[
\cdots\longrightarrow W_n\otimes B(-n)\longrightarrow W_{n-1}\otimes B(-(n-1))\longrightarrow \cdots \longrightarrow W_1\otimes B(-1)\longrightarrow B\longrightarrow k_B\longrightarrow 0.
\]}

\section{The pilot family and its PBW structure}

We now fix the concrete family on which the paper is based.

\subsection{The family \texorpdfstring{\(K\)}{Lg} over the quantum plane}

Following \cite[Section~3]{ZZ09}, we choose the double extension of type \(K\), specialized at \(q=-1\). Let
\[
A=\Bbbk_{-1}[x_1,x_2]
=
\Bbbk\langle x_1,x_2\rangle/\langle x_2x_1+x_1x_2\rangle.
\]
Fix \(\alpha\in\Bbbk^\times\) and define
\[
B:=B_K(\alpha):=
\Bbbk\langle x_1,x_2,y_1,y_2\rangle/I_K(\alpha),
\]
where \(I_K(\alpha)\) is generated by
\begin{align}
x_2x_1 &= -x_1x_2,\label{eq:K1}\\
y_2y_1 &= -y_1y_2,\label{eq:K2}\\
y_1x_1 &= x_1y_1,\label{eq:K3}\\
y_1x_2 &= x_2y_2,\label{eq:K4}\\
y_2x_1 &= x_1y_2,\label{eq:K5}\\
y_2x_2 &= \alpha x_2y_1.\label{eq:K6}
\end{align}
These are precisely the defining relations of family \(K\) in the \(q=-1\) case; see \cite[Section~4.6]{HHR25}.

The relations \eqref{eq:K3}-\eqref{eq:K6} determine a graded algebra homomorphism
\[
\sigma=
\begin{bmatrix}
\sigma_{11} & \sigma_{12}\\
\sigma_{21} & \sigma_{22}
\end{bmatrix}:A\to M_2(A)
\]
by
\begin{align*}
\sigma_{11}(x_1)&=x_1, & \sigma_{12}(x_1)&=0, &
\sigma_{21}(x_1)&=0, & \sigma_{22}(x_1)&=x_1,\\
\sigma_{11}(x_2)&=0, & \sigma_{12}(x_2)&=x_2, &
\sigma_{21}(x_2)&=\alpha x_2, & \sigma_{22}(x_2)&=0.
\end{align*}
Hence
\[
B=A_P[y_1,y_2;\sigma],
\qquad
P=(-1,0),
\]
so \(B\) is a trimmed graded double Ore extension of the quantum plane.

Equivalently, the corresponding matrix \(\Sigma_K(\alpha)\) is
\[
\Sigma_K(\alpha)=
\begin{bmatrix}
1 & 0 & 0 & 0\\
0 & 0 & 0 & 1\\
0 & 0 & 1 & 0\\
0 & \alpha & 0 & 0
\end{bmatrix},
\]
and therefore \(\det(\Sigma_K(\alpha))=-\alpha\neq 0\).

\begin{proposition}\label{prop:BK-basic}
Let \(B=B_K(\alpha)\) with \(\alpha\neq 0\). Then:
\begin{enumerate}[label=\rm (\arabic*)]
    \item \(B\) is a trimmed graded double Ore extension of the quantum plane \(A=\Bbbk_{-1}[x_1,x_2]\);
    \item \(B\) is not an iterated Ore extension in either of the two natural orders;
    \item \(B\) is a Koszul Artin-Schelter regular algebra of type \((14641)\).
\end{enumerate}
\end{proposition}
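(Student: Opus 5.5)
The plan is to verify the three items in order, relying as much as possible on the structural results already recorded.

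For (1), I will check the conditions of Definition~\ref{def:double-ore} for \(B=A_P[y_1,y_2;\sigma]\) with \(P=(-1,0)\), \(\delta=0\) and \(\tau=\{0,0,0\}\). The generation condition (a)(i) and the \(y\)-relation \eqref{eq:K2} are immediate from the presentation. Condition (a)(iv) is exactly the content of the mixing relations \eqref{eq:K3}--\eqref{eq:K6}.

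The real content lies in the freeness conditions (a)(iii) and (b)(ii). For these I will use the cited result of \cite{HHR25}: the ideal \(I_K(\alpha)\) has a finite Gr\"obner--Shirshov basis. Choose a deg-lex order with \(y_2>y_1>x_2>x_1\). The leading words of the six relations are then \(x_2x_1, y_2y_1, y_1x_1, y_1x_2, y_2x_1, y_2x_2\). I expect the overlaps (for instance \(y_2y_1x_2\) and \(y_2x_2x_1\)) to resolve with no new relations. Granting this, the normal words are \(x_1^ax_2^by_1^cy_2^d\), which gives freeness as a left \(A\)-module on \(\{y_1^iy_2^j\}\).

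The right-module basis \(\{y_2^iy_1^j\}\) follows by a symmetric argument with the opposite order, or alternatively from the fact that \(\Sigma_K(\alpha)\) is invertible. With \(\det\Sigma_K(\alpha)=-\alpha\neq0\), the mixing relations can be solved for \(x_sy_i\) in terms of \(y_jx_t\). This also gives condition (b)(iii), namely \(y_1A+y_2A+A=Ay_1+Ay_2+A\). Together with \(p_{12}=-1\neq0\), this completes (1). This part also requires checking that \(\sigma\) is an algebra homomorphism on \(A\), i.e.\ that it respects \(x_2x_1+x_1x_2\). This is a direct \(2\times2\) matrix computation:
\[
\sigma(x_2)\sigma(x_1)+\sigma(x_1)\sigma(x_2)=0.
\]
It holds because \(x_2x_1=-x_1x_2\) in \(A\).

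For (2), I will apply Proposition~\ref{prop:iterated-criterion}. We have \(\sigma_{12}(x_2)=x_2\neq0\), so \(\sigma_{12}\neq0\) and part (1) excludes an iterated presentation \(R[y_1][y_2]\). We also have \(\sigma_{21}(x_2)=\alpha x_2\neq0\), so part (2) excludes the order \(R[y_2][y_1]\).

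For (3), I will invoke Proposition~\ref{prop:type-14641}. The algebra \(B\) is connected graded and generated by the four degree-one elements \(x_1,x_2,y_1,y_2\). It is a double extension of \(A\), which is AS-regular of global dimension \(2\). Hence \(B\) is AS-regular of type \((14641)\) and Koszul.

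The main obstacle is the freeness in (1), that is, the confluence of overlaps. This is where I would cite \cite[Section~4.6]{HHR25} rather than redo the check. As a sanity check, I would verify that the Hilbert series of the candidate normal words is \((1-t)^{-4}\).
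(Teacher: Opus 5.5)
Your proposal is correct and follows the paper's proof. Part (2) uses the criterion of Proposition~\ref{prop:iterated-criterion} with \(\sigma_{12}(x_2)=x_2\neq 0\) and \(\sigma_{21}(x_2)=\alpha x_2\neq 0\), part (3) invokes Proposition~\ref{prop:type-14641}, and for (1) you verify the freeness and equality conditions of Definition~\ref{def:double-ore} explicitly through the Gr\"obner--Shirshov basis of \cite{HHR25} (whose four cubic overlaps do resolve) and the invertibility of \(\Sigma_K(\alpha)\), filling in what the paper leaves to ``the preceding discussion.''
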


\begin{proof}
The first assertion follows directly from the preceding discussion. For the second, note that
\[
\sigma_{12}(x_2)=x_2\neq 0
\qquad\text{and}\qquad
\sigma_{21}(x_2)=\alpha x_2\neq 0.
\]
Hence \(\sigma_{12}\neq 0\) and \(\sigma_{21}\neq 0\). By Proposition~\ref{prop:iterated-criterion}, an iterated Ore presentation in the order \(A[y_1][y_2]\) would force \(\sigma_{12}=0\), whereas an iterated Ore presentation in the order \(A[y_2][y_1]\) would force \(\sigma_{21}=0\) and \(p_{11}=0\). Therefore neither presentation is possible. The third assertion follows from Proposition~\ref{prop:type-14641}.
\end{proof}

\subsection{PBW basis and normal monomials}

The family \(K\) admits a finite Gr\"obner-Shirshov basis and hence a PBW basis \cite[Section~4.6]{HHR25}. In the present notation, this yields the following concrete normal form.

\begin{proposition}\label{prop:pbw}
The algebra \(B\) has a PBW basis consisting of the ordered monomials
\[
\mathcal{B}:=\{x_1^a x_2^b y_1^c y_2^d \mid a,b,c,d\in\mathbb{N}\}.
\]
In particular, every element of \(B\) can be written uniquely as a finite \(\Bbbk\)-linear combination of monomials in \(\mathcal{B}\) and
\[
H_B(t)=\frac{1}{(1-t)^4}.
\]
\end{proposition}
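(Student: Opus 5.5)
We prove Proposition~\ref{prop:pbw} by Bergman's Diamond Lemma, using the six defining relations \eqref{eq:K1}--\eqref{eq:K6} directly as a rewriting system. Fix the generator order \(x_1<x_2<y_1<y_2\) and the deg-lex order on words. Each relation then rewrites its leading word into a word that is smaller in this order:
\begin{align*}
x_2x_1&\to -x_1x_2, & y_2y_1&\to -y_1y_2, & y_1x_1&\to x_1y_1,\\
y_1x_2&\to x_2y_2, & y_2x_1&\to x_1y_2, & y_2x_2&\to \alpha x_2y_1.
\end{align*}
All rules are homogeneous, so the order is a semigroup well-order compatible with the reductions. The irreducible words are exactly those avoiding the six leading words. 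These are precisely the words \(x_1^ax_2^by_1^cy_2^d\), because any other word contains a descent \(uv\) with \(u>v\), and every such descent is one of the six leading words.

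The key step is resolving the overlap ambiguities. All rules are of length two, so these are the words \(uvw\) in which both \(uv\) and \(vw\) are leading words. Listing them gives four overlaps:
\[
y_2y_1x_1,\quad y_2y_1x_2,\quad y_2x_2x_1,\quad y_1x_2x_1.
\]
For each one we reduce both ways to normal form. For example, for \(y_1x_2x_1\):
\begin{itemize}
\item one way gives \(x_2y_2x_1\to x_2x_1y_2\to -x_1x_2y_2\);
\item the other gives \(-y_1x_1x_2\to -x_1y_1x_2\to -x_1x_2y_2\).
\end{itemize}
These agree. Similarly, for \(y_2x_2x_1\):
\begin{itemize}
\item one way gives \(\alpha x_2y_1x_1\to \alpha x_2x_1y_1\to -\alpha x_1x_2y_1\);
\item the other gives \(-y_2x_1x_2\to -x_1y_2x_2\to -\alpha x_1x_2y_1\).
\end{itemize}
For \(y_2y_1x_1\), both reductions give \(-x_1y_1y_2\). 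The remaining overlap \(y_2y_1x_2\) is the one I expect to be the real obstacle, since \(\alpha\) appears nontrivially:
\begin{itemize}
\item one way gives \(y_2x_2y_2\to \alpha x_2y_1y_2\);
\item the other gives \(-y_1y_2x_2\to -\alpha y_1x_2y_1\to -\alpha x_2y_2y_1\to \alpha x_2y_1y_2\).
\end{itemize}
These agree for every \(\alpha\). If the signs fail to match here, then the Gröbner--Shirshov basis of \cite[Section~4.6]{HHR25} must contain an extra cubic element, and one would adopt that basis instead. The direct computation, however, indicates that the system is already complete.

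Since all ambiguities resolve, the Diamond Lemma shows that the irreducible words form a \(\Bbbk\)-basis of \(B\), and these are the monomials in \(\mathcal B\). This agrees with \cite[Section~4.6]{HHR25}. Finally, \(\mathcal B\) contains exactly as many monomials of degree \(n\) as there are \(4\)-tuples \((a,b,c,d)\in\mathbb N^4\) with sum \(n\). Hence
\[
H_B(t)=\Bigl(\sum_{m\ge0}t^m\Bigr)^4=\frac{1}{(1-t)^4}.
\]
This is also consistent with \(B\) being AS-regular of type \((14641)\) by Proposition~\ref{prop:BK-basic}.
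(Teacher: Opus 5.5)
Your proof is correct and takes the same route as the paper. The paper's proof just cites \cite[Section~4.6]{HHR25} for the fact that the six defining relations already form a Gr\"obner--Shirshov basis, then reads off the ordered monomials as the irreducible words and counts them. You instead verify this directly with Bergman's Diamond Lemma, and your resolutions of all four overlaps, including the sign check for $y_2y_1x_2$, are correct, which also makes the contingency remark about an extra cubic element unnecessary.
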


\begin{proof}
By \cite[Section~4.6]{HHR25}, the defining set associated with family \(K\) is a finite Gr\"obner-Shirshov basis. Therefore the irreducible words form a \(\Bbbk\)-basis of \(B\) and those irreducible words are precisely the ordered monomials \(x_1^a x_2^b y_1^c y_2^d\). The formula for the Hilbert series follows immediately.
\end{proof}

The PBW basis also makes it easy to identify the first natural cyclic quotients attached to the presentation of \(B\). Set
\[
M_x:=B/(x_1,x_2)B,
\
M_y:=B/(y_1,y_2)B.
\]
Since the mixed relations show that \((x_1,x_2)B\) and \((y_1,y_2)B\) are in fact two-sided ideals, these quotients may also be regarded as graded algebra quotients. In particular, one has
\[
M_x\cong \Bbbk_{-1}[y_1,y_2],
\
M_y\cong \Bbbk_{-1}[x_1,x_2],
\]
and hence
\[
H_{M_x}(t)=H_{M_y}(t)=\frac{1}{(1-t)^2}.
\]

\section{Explicit minimal resolutions}

We now pass to the main homological computations. Throughout this section, all modules are graded right \(B\)-modules.

\subsection{The trivial right module}

Since \(B\) is a quadratic Koszul algebra of type \((14641)\), the minimal graded free resolution of \(k_B\) has the shape
\[
0\longrightarrow B(-4)\xrightarrow{d_4}B(-3)^4\xrightarrow{d_3}B(-2)^6\xrightarrow{d_2}B(-1)^4\xrightarrow{d_1}B\longrightarrow k_B\longrightarrow 0.
\]

Let \(F_1=B(-1)^4\) with basis \(e_1,e_2,e_3,e_4\), corresponding respectively to \(x_1,x_2,y_1,y_2\). Then
\begin{equation}\label{eq:d1}
d_1=
\begin{pmatrix}
x_1 & x_2 & y_1 & y_2
\end{pmatrix}.
\end{equation}

Let \(F_2=B(-2)^6\) with basis \(f_1,\dots,f_6\) corresponding to the six quadratic relations
\begin{align*}
r_1&:=x_2x_1+x_1x_2, &
r_2&:=y_2y_1+y_1y_2,\\
r_3&:=y_1x_1-x_1y_1, &
r_4&:=y_1x_2-x_2y_2,\\
r_5&:=y_2x_1-x_1y_2, &
r_6&:=y_2x_2-\alpha x_2y_1.
\end{align*}
Then the second differential is
\begin{equation}\label{eq:d2}
d_2=
\begin{pmatrix}
x_2 & 0   & -y_1 & 0      & -y_2 & 0 \\
x_1 & 0   & 0    & -y_2   & 0    & -\alpha y_1 \\
0   & y_2 & x_1  & x_2    & 0    & 0 \\
0   & y_1 & 0    & 0      & x_1  & x_2
\end{pmatrix}.
\end{equation}
By construction, the \(j\)-th column of \(d_2\) is the coefficient vector of \(r_j\) with respect to the ordered basis \((x_1,x_2,y_1,y_2)\) and therefore \(d_1d_2=0\).

The next step is controlled by the cubic overlaps of the defining relations. In the monic Gr\"obner-Shirshov presentation of family \(K\), there are exactly four critical cubic overlaps, all of them trivial modulo the defining relations; see \cite[Section~4.6]{HHR25}. This suggests the following four cubic syzygies:
\begin{align}
\eta_1&:=f_1y_2+f_3x_2+f_4x_1,\label{eq:eta1}\\
\eta_2&:=\alpha f_1y_1+f_5x_2+f_6x_1,\label{eq:eta2}\\
\eta_3&:=f_2x_1-f_3y_2-f_5y_1,\label{eq:eta3}\\
\eta_4&:=f_2x_2-\alpha f_4y_1-f_6y_2.\label{eq:eta4}
\end{align}
A direct verification using \eqref{eq:K1}-\eqref{eq:K6} shows that \(d_2(\eta_i)=0\) for \(i=1,2,3,4\). For example,
\begin{align*}
d_2(\eta_1)
&=(x_2x_1+x_1x_2)y_2+(y_1x_1-x_1y_1)x_2+(y_1x_2-x_2y_2)x_1\\
&\equiv -x_1x_2y_2+x_1x_2y_2+x_1y_1x_2-x_1y_1x_2-x_2y_2x_1+x_2y_2x_1=0.
\end{align*}
The other three cases are analogous.

Let \(F_3=B(-3)^4\) with basis \(g_1,g_2,g_3,g_4\) corresponding to \(\eta_1,\eta_2,\eta_3,\eta_4\). Then
\begin{equation}\label{eq:d3}
d_3=
\begin{pmatrix}
y_2      & \alpha y_1 & 0       & 0 \\
0        & 0          & x_1     & x_2 \\
x_2      & 0          & -y_2    & 0 \\
x_1      & 0          & 0       & -\alpha y_1 \\
0        & x_2        & -y_1    & 0 \\
0        & x_1        & 0       & -y_2
\end{pmatrix}.
\end{equation}
A direct calculation shows that \(d_2d_3=0\).

There is also a unique quartic syzygy among \(\eta_1,\eta_2,\eta_3,\eta_4\), namely
\begin{equation}\label{eq:quartic}
\omega:=\alpha \eta_1y_1+\eta_2y_2+\eta_3x_2+\eta_4x_1.
\end{equation}
Indeed,
\[
d_3(\omega)=
\begin{pmatrix}
\alpha(y_2y_1+y_1y_2)\\
x_1x_2+x_2x_1\\
\alpha x_2y_1-y_2x_2\\
\alpha x_1y_1-\alpha y_1x_1\\
x_2y_2-y_1x_2\\
x_1y_2-y_2x_1
\end{pmatrix}
=0
\]
in \(F_2\) by the defining relations of \(B\).

Let \(F_4=B(-4)\) with basis \(h\) and define
\begin{equation}\label{eq:d4}
d_4=
\begin{pmatrix}
\alpha y_1\\
y_2\\
x_2\\
x_1
\end{pmatrix}.
\end{equation}
Then \(d_3d_4=0\).

\begin{theorem}\label{thm:k-resolution}
Let \(B=B_K(\alpha)\) with \(\alpha\in\Bbbk^\times\). Then the sequence
\[
0\longrightarrow B(-4)\xrightarrow{d_4}B(-3)^4\xrightarrow{d_3}B(-2)^6\xrightarrow{d_2}B(-1)^4\xrightarrow{d_1}B\longrightarrow k_B\longrightarrow 0
\]
is the minimal graded free resolution of the trivial right module \(k_B\).
In particular,
\[
\beta_{0,0}^B(k_B)=1,\quad
\beta_{1,1}^B(k_B)=4,\quad
\beta_{2,2}^B(k_B)=6,\quad
\beta_{3,3}^B(k_B)=4,\quad
\beta_{4,4}^B(k_B)=1.
\]
\end{theorem}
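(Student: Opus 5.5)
We show the displayed sequence is a complex, that it is minimal, and that it is exact. For exactness we use the Hilbert series of Proposition~\ref{prop:pbw} together with a rank argument, rather than computing kernels by hand.

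\emph{Complex and minimality.} The compositions \(d_1d_2=0\), \(d_2d_3=0\) and \(d_3d_4=0\) are verified entrywise using \eqref{eq:K1}--\eqref{eq:K6}. For \(d_1d_2\) this is built into the choice of the columns \(r_j\). For \(d_2d_3\) it is the check \(d_2(\eta_i)=0\), whose first case is displayed above and whose other three cases are identical in shape. For \(d_3d_4\) it is the computation of \(d_3(\omega)\) already shown. Every entry of every \(d_i\) is a nonzero scalar multiple of a generator or is zero. Hence \(d_i(F_i)\subseteq F_{i-1}B_{\ge 1}\), so the complex is minimal once exactness is known. The graded Betti numbers can then be read off as \(\beta_{i,i}=\binom{4}{i}\), since minimality gives \(\operatorname{Tor}_i^B(k_B,\Bbbk)\cong F_i\otimes_B\Bbbk\).

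\emph{Exactness.} We use the Koszul property from Proposition~\ref{prop:BK-basic}(3). Because \(B\) is Koszul, the minimal resolution of \(k_B\) is the right Koszul complex \(W_\bullet\otimes B\), and \(\dim W_n=1,4,6,4,1\) by \eqref{eq:type-14641-resolution}. It therefore suffices to identify our complex with the Koszul complex, which amounts to showing that our generators span the spaces \(W_n\).
\begin{itemize}
\item In degree \(2\), \(W_2=R\) is spanned by \(r_1,\dots,r_6\), so \(F_2\) and \(W_2\otimes B\) agree.
\item In degree \(3\), \(W_3=(V\otimes R)\cap(R\otimes V)\), and it has dimension \(4\). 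Each \(\eta_i\) is an element of \(R\otimes V\), namely \(\sum_j r_j\otimes(\text{linear})\), which also lies in \(V\otimes R\). This second membership is exactly the statement that the relation \(d_2(\eta_i)=0\) already holds in \(T(V)\) modulo \(V\otimes R\). Concretely, expanding \(\eta_i\) as a tensor and regrouping its terms as \(\sum_s x_s\otimes(\cdot)\) or \(y_s\otimes(\cdot)\) should produce elements of \(R\).
\item The four \(\eta_i\) are linearly independent, since the \(f_1\)-, \(f_2\)-coefficient patterns in \(d_3\) are visibly independent. Hence they span \(W_3\).
\item In degree \(4\), \(W_4\) is one-dimensional and \(\omega\) lies in it by the same reasoning, so \(\omega\) spans \(W_4\).
\end{itemize}
With these identifications, \(d_i\) is the Koszul differential, and exactness follows from Koszulity.

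\emph{Fallback and main obstacle.} If one prefers not to check the tensor-level membership \(\eta_i\in V\otimes R\), there is an alternative. Exactness of a minimal complex of free modules can be deduced from the Hilbert series identity
\[
\sum_i(-1)^i H_{F_i}(t)=(1-t)^4H_B(t)=1=H_{k}(t),
\]
combined with injectivity of \(d_4\) and exactness at \(F_0\) and \(F_1\). The Hilbert series identity by itself does not suffice, so this route still requires checking \(\ker d_2=\operatorname{im}d_3\) directly. That check could be done with the PBW basis, by comparing leading terms of the entries of \(d_3\).

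Injectivity of \(d_4\) is immediate because \(B\) is a domain (Proposition~\ref{prop:type-14641}) and \(h\mapsto(\alpha y_1 b,\dots)\) has a nonzero entry. Exactness at \(F_0\) is clear, and exactness at \(F_1\) holds because the relations \(r_j\) generate the ideal.

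I expect the main obstacle to be the middle step: showing that the \(\eta_i\) (equivalently, the columns of \(d_3\)) generate all syzygies. I would handle it through the \(W_3\) identification. Verifying \(\eta_i\in V\otimes R\) at the level of the free algebra is a finite computation, and it is the only place where the specific double Ore data \(\sigma\) and \(\alpha\) enter in an essential way.
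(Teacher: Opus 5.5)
Your proposal is correct and follows the paper's own route: Koszulity makes the right Koszul complex exact with $\dim W_n=1,4,6,4,1$, and the $\eta_i$ and $\omega$ are identified with bases of $W_3$ and $W_4$ by linear independence together with this dimension count. Your insistence on checking $\eta_i\in V\otimes R$ is exactly what makes the paper's ``linearly independent, hence a basis'' step rigorous. This membership is equivalent to $d_2(\eta_i)=0$ coordinatewise in $B(-1)^4$, and it does hold, since each coordinate is a scalar multiple of a single $r_j$. Note, however, that the displayed computation for $\eta_1$ you cite only evaluates $\sum_j r_j u_j$ in $B$, which vanishes trivially. It is your tensor regrouping, not that display, that actually verifies $d_2d_3=0$.
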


\begin{proof}
We have already checked that \(d_1d_2=0\), \(d_2d_3=0\) and \(d_3d_4=0\), so the displayed sequence is a complex. Minimality is immediate, since every matrix entry lies in \(B_{\ge 1}\).

Let
\[
V=\Bbbk x_1\oplus \Bbbk x_2\oplus \Bbbk y_1\oplus \Bbbk y_2,
\]
and let \(R\subseteq V^{\otimes 2}\) be the six-dimensional space spanned by the quadratic relations \(r_1,\dots,r_6\). Then \(B=T(V)/(R)\) and the first two maps of the displayed complex are exactly the first two maps of the right Koszul complex of \(B\) with respect to the chosen bases of \(V\) and \(R\); see \cite[Chs.~2 and~3]{PP05}.

Since \(B\) is Artin-Schelter regular of type \((14641)\), it is Koszul by Proposition~\ref{prop:BK-basic} and therefore the right Koszul complex of \(B\) is exact \cite{Pri70,PP05}. Moreover, the graded Betti numbers of \(k_B\) are forced to be
\[
1,4,6,4,1,
\]
so the third and fourth Koszul terms have ranks \(4\) and \(1\), respectively. The four cubic syzygies \(\eta_1,\eta_2,\eta_3,\eta_4\) are linearly independent and therefore provide a basis of the third Koszul space; similarly, the quartic syzygy \(\omega\) spans the fourth Koszul space. Consequently, the matrices \eqref{eq:d1}, \eqref{eq:d2}, \eqref{eq:d3} and \eqref{eq:d4} are precisely the coordinate form of the right Koszul complex of \(B\).

Hence the displayed complex is the minimal graded free resolution of \(k_B\). The graded Betti numbers follow from the degree shifts of the free modules.
\end{proof}

\begin{remark}
The explicit formulas for \(d_3\) and \(d_4\) show that the parameter \(\alpha\) survives all the way to the highest syzygies. In particular, the higher differentials retain concrete information about the defining double Ore data, rather than collapsing to a purely formal regularity pattern.
\end{remark}

\subsection{Natural cyclic right modules}

We now compute the minimal graded free resolutions of the natural cyclic right modules
\[
M_x=B/(x_1,x_2)B
\text{ and }
M_y=B/(y_1,y_2)B.
\]

\begin{proposition}\label{prop:Mx}
The sequence
\[
0\longrightarrow B(-2)\xrightarrow{\,d_2^{(x)}\,}B(-1)^2\xrightarrow{\,d_1^{(x)}\,}B\longrightarrow M_x\longrightarrow 0,
\]
where
\[
d_1^{(x)}(u,v)=ux_1+vx_2,
\
d_2^{(x)}(w)=(wx_2,wx_1),
\]
is a minimal graded free resolution of \(M_x\). In particular,
\[
\beta_{0,0}^B(M_x)=1,\
\beta_{1,1}^B(M_x)=2,\
\beta_{2,2}^B(M_x)=1,
\]
and all other graded Betti numbers vanish.
\end{proposition}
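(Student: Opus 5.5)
We read the maps as maps of graded right \(B\)-modules: \(d_1^{(x)}\) sends the basis vectors \(e_1,e_2\) of \(B(-1)^2\) to \(x_1,x_2\), so \((u,v)\mapsto x_1u+x_2v\), and \(d_2^{(x)}\) sends the generator of \(B(-2)\) to \((x_2,x_1)\), so \(w\mapsto (x_2w,x_1w)\). This is the only reading under which the maps are right \(B\)-linear and the image of \(d_1^{(x)}\) is the right ideal \((x_1,x_2)B\). The displayed formulas should be understood in this way, with the generators acting from the left on coordinates.

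\emph{Complex and minimality.} The composite is
\[
d_1^{(x)}d_2^{(x)}(w)=(x_1x_2+x_2x_1)w=0
\]
by \eqref{eq:K1}, so the sequence is a complex. All matrix entries lie in \(B_{\ge 1}\), so it is minimal. The Betti numbers can then be read off from the shifts, once exactness is established.

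\emph{Exactness at the ends.} At \(B\), exactness holds by definition, since \(\operatorname{im} d_1^{(x)}=(x_1,x_2)B\) and \(M_x=B/(x_1,x_2)B\). At \(B(-2)\), injectivity of \(d_2^{(x)}\) follows because \(B\) is a domain (Proposition~\ref{prop:type-14641}(1), applicable by Proposition~\ref{prop:BK-basic}). Concretely, \(x_2w=0\) forces \(w=0\). This can also be seen from the PBW basis of Proposition~\ref{prop:pbw}: left multiplication by \(x_2\) sends \(x_1^ax_2^by_1^cy_2^d\) to \(\pm x_1^ax_2^{b+1}y_1^cy_2^d\), which is injective on basis monomials.

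\emph{Exactness in the middle.} This is the main step, and I would handle it by a Hilbert series count. Since every term is a graded complex of finite-dimensional pieces, the alternating sum of Hilbert series of the terms equals that of the homology. The terms give
\[
\frac{1-2t+t^2}{(1-t)^4}=\frac{1}{(1-t)^2}.
\]
By the discussion after Proposition~\ref{prop:pbw}, this equals \(H_{M_x}(t)\). For this we need the PBW fact that every monomial \(x_1^ax_2^by_1^cy_2^d\) with \(a+b>0\) lies in \(x_1B+x_2B\), so that \(M_x\) has basis \(\{y_1^cy_2^d\}\). Homology vanishes at the two ends, so the homology at \(B(-1)^2\) has Hilbert series \(0\) and is therefore zero.

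As a sanity check, one could also argue directly. Suppose \(x_1u+x_2v=0\), and expand \(u,v\) in PBW monomials. Comparing terms shows that \(u\) must lie in \(x_2B\), say \(u=x_2w\), and then \(v=x_1w\) by the domain property. I expect the Hilbert series argument to be cleaner, and it is the route I would follow.
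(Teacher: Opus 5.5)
Your proof is correct and is essentially the paper's argument: $d_1^{(x)}d_2^{(x)}=0$ follows from $x_2x_1=-x_1x_2$, injectivity of $d_2^{(x)}$ holds because $B$ is a domain, exactness in the middle follows from the Hilbert series identity $(1-2t+t^2)/(1-t)^4=1/(1-t)^2=H_{M_x}(t)$, and minimality holds because all entries lie in $B_{\ge 1}$. Your reading of the maps as left multiplications, $(u,v)\mapsto x_1u+x_2v$ and $w\mapsto(x_2w,x_1w)$, is the correct one for graded right modules, since the paper's displayed formulas and its check $w(x_2x_1+x_1x_2)=0$ are literally left-module maps; your PBW remarks justifying $H_{M_x}(t)=1/(1-t)^2$ and the injectivity of left multiplication by $x_2$ are a useful self-contained supplement.
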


\begin{proof}
Clearly \(\operatorname{coker} d_1^{(x)}\cong M_x\) and
\[
d_1^{(x)}d_2^{(x)}(w)=w(x_2x_1+x_1x_2)=0
\]
by \eqref{eq:K1}, so \(\operatorname{Im}d_2^{(x)}\subseteq \operatorname{Ker} d_1^{(x)}\). The map \(d_2^{(x)}\) is injective because \(B\) is a domain. From the exact sequence
\[
0\longrightarrow \operatorname{Ker} d_1^{(x)}\longrightarrow B(-1)^2\xrightarrow{\,d_1^{(x)}\,}B\longrightarrow M_x\longrightarrow 0
\]
and the formulas \(H_B(t)=1/(1-t)^4\), \(H_{M_x}(t)=1/(1-t)^2\), we obtain
\[
H_{\operatorname{Ker} d_1^{(x)}}(t)
=
2tH_B(t)-H_B(t)+H_{M_x}(t)
=
\frac{t^2}{(1-t)^4}
=
H_{B(-2)}(t).
\]
Therefore \(\operatorname{Ker} d_1^{(x)}=\operatorname{Im}d_2^{(x)}\) and the sequence is exact. Minimality is immediate.
\end{proof}

\begin{proposition}\label{prop:My}
The sequence
\[
0\longrightarrow B(-2)\xrightarrow{\,d_2^{(y)}\,}B(-1)^2\xrightarrow{\,d_1^{(y)}\,}B\longrightarrow M_y\longrightarrow 0,
\]
where
\[
d_1^{(y)}(u,v)=uy_1+vy_2,
\qquad
d_2^{(y)}(w)=(wy_2,wy_1),
\]
is a minimal graded free resolution of \(M_y\). In particular,
\[
\beta_{0,0}^B(M_y)=1,\qquad
\beta_{1,1}^B(M_y)=2,\qquad
\beta_{2,2}^B(M_y)=1,
\]
and all other graded Betti numbers vanish.
\end{proposition}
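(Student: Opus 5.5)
The plan is to mirror the proof of Proposition~\ref{prop:Mx}, replacing the relation \eqref{eq:K1} by \eqref{eq:K2}. First, since \(d_1^{(y)}\) is right multiplication into \(B\) with image the right ideal \((y_1,y_2)B\), we have \(\operatorname{coker} d_1^{(y)}\cong M_y\). Next we check that the sequence is a complex:
\[
d_1^{(y)}d_2^{(y)}(w)=wy_2y_1+wy_1y_2=w(y_2y_1+y_1y_2)=0
\]
by \eqref{eq:K2}. So \(\operatorname{Im}d_2^{(y)}\subseteq\operatorname{Ker}d_1^{(y)}\). Injectivity of \(d_2^{(y)}\) follows because \(B\) is a domain (Proposition~\ref{prop:type-14641}(1), applicable via Proposition~\ref{prop:BK-basic}): if \(wy_2=0\), then \(w=0\).

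For exactness in the middle, I will compare Hilbert series, as in Proposition~\ref{prop:Mx}. From the exact sequence
\[
0\to\operatorname{Ker}d_1^{(y)}\to B(-1)^2\to B\to M_y\to 0
\]
and the formulas \(H_B(t)=1/(1-t)^4\) (Proposition~\ref{prop:pbw}) and \(H_{M_y}(t)=1/(1-t)^2\), we get
\[
H_{\operatorname{Ker}d_1^{(y)}}(t)=\frac{2t-1+(1-t)^2}{(1-t)^4}=\frac{t^2}{(1-t)^4}=H_{B(-2)}(t).
\]
Since \(d_2^{(y)}\) is injective and degree-preserving, \(\operatorname{Im}d_2^{(y)}\) has the same Hilbert series as \(\operatorname{Ker}d_1^{(y)}\). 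Being contained in it, the two must coincide. Minimality is immediate because all entries \(y_1,y_2\) lie in \(B_{\ge1}\), and the Betti numbers are read off from the shifts.

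The only point needing care is the identification \(H_{M_y}(t)=1/(1-t)^2\), i.e.\ that \((y_1,y_2)B\) has PBW basis exactly the monomials \(x_1^ax_2^by_1^cy_2^d\) with \(c+d\ge1\). Via the PBW basis this is clear. By \eqref{eq:K3}--\eqref{eq:K6}, \(y_iB=\sum_j By_j\) up to reordering, so \((y_1,y_2)B\) is two-sided. Every product of a monomial with \(y_1\) or \(y_2\) on the right normalizes (using only the rewriting rules, which preserve the total \(y\)-degree) to a combination of ordered monomials with positive \(y\)-degree. Conversely, each such ordered monomial visibly lies in \((y_1,y_2)B\). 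Hence the quotient has basis \(\{x_1^ax_2^b\}\), which gives the claimed Hilbert series. This step was already asserted in the paper before Proposition~\ref{prop:Mx}, so it may simply be cited.
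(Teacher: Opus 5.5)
Your proposal is correct and follows the paper's own argument, which just transfers the proof of Proposition~\ref{prop:Mx} with \eqref{eq:K2} in place of \eqref{eq:K1}: $d_1^{(y)}d_2^{(y)}=0$ by \eqref{eq:K2}, $d_2^{(y)}$ is injective because $B$ is a domain, and $\operatorname{Ker}d_1^{(y)}=\operatorname{Im}d_2^{(y)}$ follows by comparing Hilbert series. One minor point, inherited from the paper: the maps as written, $(u,v)\mapsto uy_1+vy_2$ and $w\mapsto(wy_2,wy_1)$, are left- rather than right-$B$-linear, so for right modules you should use $(u,v)\mapsto y_1u+y_2v$ and $w\mapsto(y_2w,y_1w)$; your argument goes through unchanged for these maps, and then $\operatorname{Im}d_1^{(y)}=(y_1,y_2)B$ holds by definition rather than via two-sidedness.
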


\begin{proof}
The proof is identical to that of Proposition~\ref{prop:Mx}, using the relation \(y_2y_1+y_1y_2=0\) in place of \(x_2x_1+x_1x_2=0\).
\end{proof}

\section{Homological consequences}

The explicit computations above reveal a rigid but informative homological pattern for the pilot family \(K\). The trivial right module \(k_B\) has the expected linear resolution of a four-dimensional regular algebra of type \((14641)\), while the natural cyclic modules \(M_x\) and \(M_y\) have linear resolutions of length \(2\) with graded Betti pattern \((1,2,1)\). Thus a single algebra simultaneously exhibits a four-dimensional regular scale encoded by \(k_B\) and a two-dimensional quadratic scale encoded by its most natural cyclic quotients.

\begin{theorem}\label{thm:comparison}
Let \(B=B_K(\alpha)\) with \(\alpha\in\Bbbk^\times\). Then:
\begin{enumerate}[label=\rm (\arabic*)]
    \item the trivial right module \(k_B\) has a minimal graded free resolution
    \[
    0\longrightarrow B(-4)\longrightarrow B(-3)^4\longrightarrow B(-2)^6\longrightarrow B(-1)^4\longrightarrow B\longrightarrow k_B\longrightarrow 0;
    \]
    \item the cyclic right modules \(M_x\) and \(M_y\) have minimal graded free resolutions
    \[
    0\longrightarrow B(-2)\longrightarrow B(-1)^2\longrightarrow B\longrightarrow M_x\longrightarrow 0,
    \]
    and
    \[
    0\longrightarrow B(-2)\longrightarrow B(-1)^2\longrightarrow B\longrightarrow M_y\longrightarrow 0;
    \]
    \item the corresponding graded Betti patterns are
    \[
    (1,4,6,4,1),\qquad (1,2,1),\qquad (1,2,1).
    \]
\end{enumerate}
\end{theorem}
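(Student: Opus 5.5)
Theorem~\ref{thm:comparison} is essentially a repackaging of results already established, so the plan is to assemble it from Theorem~\ref{thm:k-resolution} and Propositions~\ref{prop:Mx} and~\ref{prop:My}, checking only that the Betti patterns in (3) are read off correctly.

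For part (1), I would invoke Theorem~\ref{thm:k-resolution} directly. It gives the explicit complex with differentials \eqref{eq:d1}--\eqref{eq:d4}, identified with the right Koszul complex of the Koszul algebra \(B\) (Proposition~\ref{prop:BK-basic}). That complex is exact and minimal, since all matrix entries lie in \(B_{\ge 1}\). Forgetting the matrices leaves exactly the displayed shape.

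For part (2), I would cite Propositions~\ref{prop:Mx} and~\ref{prop:My}. Their argument has three steps:
\begin{enumerate}
\item the composite vanishes by \eqref{eq:K1}, respectively \eqref{eq:K2};
\item \(d_2^{(x)}\) and \(d_2^{(y)}\) are injective because \(B\) is a domain (Proposition~\ref{prop:type-14641});
\item exactness in the middle follows from a Hilbert series count, using \(H_B(t)=(1-t)^{-4}\) from Proposition~\ref{prop:pbw} and \(H_{M_x}(t)=H_{M_y}(t)=(1-t)^{-2}\).
\end{enumerate}
The identification \(M_x\cong\Bbbk_{-1}[y_1,y_2]\) rests on \((x_1,x_2)B\) being two-sided and on the PBW basis. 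I would reconfirm this point if anything, since the Hilbert series computation depends on it.

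For part (3), the graded Betti numbers are the multiplicities of the shifts in the minimal resolutions, because \(\beta_{i,j}^B(M)=\dim\operatorname{Tor}_i^B(M,\Bbbk)_j\) is computed by \(F_\bullet\otimes_B\Bbbk\), which has zero differentials by minimality. This gives the total Betti vectors \((1,4,6,4,1)\) and \((1,2,1)\) twice, all concentrated on the diagonal \(j=i\). The only potential obstacle is the correctness of the inputs, in particular exactness of the \(k_B\) complex. That exactness rests on the Koszul property together with the claim that \(\eta_1,\dots,\eta_4\) and \(\omega\) span \(W_3\) and \(W_4\). It is already handled in Theorem~\ref{thm:k-resolution}, so no new work is needed here.
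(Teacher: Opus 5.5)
Your proposal is correct and matches the paper's proof: part (1) is Theorem~\ref{thm:k-resolution}, part (2) is Propositions~\ref{prop:Mx} and~\ref{prop:My}, and the Betti patterns in (3) are read off from the degree shifts of these minimal resolutions. Your added remark, that \(F_\bullet\otimes_B\Bbbk\) has zero differentials by minimality, is precisely the reason those shifts are the graded Betti numbers.
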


\begin{proof}
Part \rm(1) is Theorem~\ref{thm:k-resolution}, while part \rm(2) is given by Propositions~\ref{prop:Mx} and~\ref{prop:My}. The graded Betti patterns follow immediately.
\end{proof}

\begin{corollary}\label{cor:poincare}
The bigraded Poincar\'e series of these right \(B\)-modules are
\[
P_{k_B}^B(s,t)=1+4st+6s^2t^2+4s^3t^3+s^4t^4=(1+st)^4,
\]
and
\[
P_{M_x}^B(s,t)=P_{M_y}^B(s,t)=1+2st+s^2t^2=(1+st)^2.
\]
\end{corollary}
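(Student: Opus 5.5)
The plan is to read the Poincaré series directly off the graded Betti numbers already computed. The bigraded Poincaré series is defined as \(\sum_{i,j}\beta_{i,j}^B(M)s^it^j\), so for each module I only need the full list of its nonzero graded Betti numbers. Those lists come from Theorem~\ref{thm:comparison}, or equivalently from Theorem~\ref{thm:k-resolution} and Propositions~\ref{prop:Mx} and~\ref{prop:My}.

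For \(k_B\), Theorem~\ref{thm:k-resolution} gives \(\beta_{i,i}^B(k_B)=\binom{4}{i}\) for \(0\le i\le 4\). I would next justify that every other \(\beta_{i,j}^B(k_B)\) is zero. The minimal resolution is unique up to isomorphism, and in it \(F_i\) is generated purely in degree \(i\), with \(F_i=0\) for \(i\ge 5\). Hence \(\operatorname{Tor}_i^B(k_B,\Bbbk)\cong F_i\otimes_B\Bbbk\) is concentrated in internal degree \(i\). Alternatively, this follows from the identification \(\operatorname{Tor}_i^B(k_B,\Bbbk)\cong W_i\) for the Koszul complex.

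Summing then gives \(1+4st+6s^2t^2+4s^3t^3+s^4t^4\). This equals \((1+st)^4\) by the binomial theorem in the single variable \(u=st\).

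For \(M_x\) and \(M_y\), Propositions~\ref{prop:Mx} and~\ref{prop:My} state explicitly that the only nonzero graded Betti numbers are \(\beta_{0,0}=1\), \(\beta_{1,1}=2\) and \(\beta_{2,2}=1\). The series is therefore \(1+2st+s^2t^2=(1+st)^2\).

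I do not expect a genuine obstacle. The only point needing care is the vanishing of off-diagonal Betti numbers for \(k_B\), because Theorem~\ref{thm:k-resolution} lists only the diagonal ones. This is handled by the fact that the graded Betti numbers are read from the shifts \(B(-j)\) in a minimal resolution, and all those shifts are displayed in the theorem.
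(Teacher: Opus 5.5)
Your proposal is correct and matches the paper's proof, which simply reads the series off the graded Betti numbers from Theorem~\ref{thm:k-resolution} and Propositions~\ref{prop:Mx} and~\ref{prop:My}. You also show that the off-diagonal \(\beta_{i,j}^B(k_B)\) vanish, because in the minimal resolution each \(F_i\) is generated in degree \(i\), so \(\operatorname{Tor}_i^B(k_B,\Bbbk)\cong F_i\otimes_B\Bbbk\) is concentrated in degree \(i\); the paper leaves this implicit, and your argument for it is sound.
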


\begin{proof}
This is an immediate reformulation of the graded Betti numbers.
\end{proof}

Theorem~\ref{thm:comparison} shows that the trivial module alone does not exhaust the homological information carried by the family. Although its Betti pattern is forced by regularity, the explicit matrices and the behavior of the natural cyclic quotients still reflect the defining parameter \(\alpha\) and the overlap structure of the PBW presentation. In particular, the pilot family provides a concrete instance in which the defining relations, the PBW basis and the minimal resolutions of natural modules can all be read from the same computation.

\begin{remark}
The explicit matrices obtained here are compatible with the general homological framework developed by Cao, Shen and Wang for graded double Ore extensions of Koszul algebras \cite{CSW25}. In the present pilot family, however, the PBW and overlap structure is simple enough to make all differentials visible directly in coordinates.
\end{remark}

The present calculations suggest that PBW and Gr\"obner-Shirshov methods provide an effective bridge between the defining relations of a trimmed double Ore extension and the explicit homological behavior of its graded modules. It would therefore be natural to carry out the same program for other genuinely double families in the Zhang-Zhang classification and to seek uniform matrix descriptions for the resulting minimal resolutions.

\section{Future work}

The computations in this note suggest several natural directions for further investigation.
A first one is to extend the present analysis to other genuinely double trimmed families
in the Zhang--Zhang classification and compare the resulting minimal graded free resolutions.
A second direction is to determine whether the natural cyclic right modules
\[
B/(x_1,x_2)B
\ \text{and} \
B/(y_1,y_2)B
\]
continue to admit linear resolutions in wider classes of graded double Ore extensions.
Finally, it would be interesting to seek uniform matrix descriptions for the higher differentials
in terms of the defining DE-data, with the long-term goal of developing a broader explicit
homological theory for trimmed graded double Ore extensions.

\end{document}